\documentclass[12pt,letterpaper]{article}
\usepackage[margin=1in]{geometry}

\usepackage{amsmath,amsthm,amssymb}
\usepackage{tikz-cd}
\usepackage{hyperref}
\usepackage[nameinlink,noabbrev,capitalise]{cleveref}
\hypersetup{
  hidelinks,
  pdftitle={A minimising principle for interval covers of zigzag persistence modules}
}

\theoremstyle{plain}
\newtheorem{thm}{Theorem}[section]

\newtheorem{cor}[thm]{Corollary}

\theoremstyle{definition}

\newtheorem{ex}[thm]{Example}
\numberwithin{equation}{section}

\newcommand{\F}{\mathbb{F}}
\newcommand{\id}{\mathrm{id}}
\DeclareMathOperator{\rank}{rank}
\DeclareMathOperator{\Supp}{Supp}
\DeclareMathOperator{\im}{im}

\title{\bfseries A minimising principle for interval covers of zigzag persistence modules}

\newif\ifanonymous
\anonymousfalse
\ifanonymous
\author{}
\else
\author{%
  Junyan Chu \qquad Shizuo Kaji \qquad Meiyan Kang%
}
\fi

\date{}

\begin{document}

\maketitle

\begingroup
\renewcommand{\thefootnote}{}
\makeatletter
\renewcommand{\@makefntext}[1]{\noindent#1}
\makeatother
\begin{NoHyper}
\footnotetext[0]{%
\noindent\textbf{2020 Mathematics Subject Classification.} 55N31, 16G20.\par
\smallskip
\noindent\textbf{Keywords.} Zigzag persistence, interval decomposition,
representations of type-$A$ quivers, topological data analysis.\par
}
\end{NoHyper}
\endgroup

\begin{abstract} 
Zigzag persistence modules serve as an essential tool in topological data analysis.
The fundamental theorem of persistent homology states that any zigzag persistence module decomposes into a direct sum of simple building blocks called interval modules.
We consider an interval cover, a possibly redundant family of
interval submodules spanning every vertex space of a finite zigzag persistence module.
We give an elementary descent argument that replaces a redundant cover with a minimal one that is an interval decomposition.
This provides an optimisation perspective on persistence modules and their interval decomposition.
As an application, we give an estimate of the generalised
rank invariant in terms of a cover.
\end{abstract}

\section{Introduction}\label{sec:intro}

A finite zigzag persistence module assigns finite-dimensional vector spaces
to the vertices of an oriented path and linear maps to its edges.
More precisely,
fix a field $\F$, a positive integer $M$, and an orientation $Q$ of the path
\[
                 1\;-\;2\;-\;\cdots\;-\;M.
\]
A \emph{zigzag persistence module} $V$ over $Q$ consists of
finite-dimensional $\F$-vector spaces $V_1,\ldots,V_M$ and one linear map
$h_e$ along each edge $e$ in its prescribed direction.
By an \emph{interval} we mean a nonempty set
$I=[a,b]=\{a,a+1,\ldots,b\}\subseteq\{1,\ldots,M\}$ of consecutive vertices.
Its \emph{length} is $|I|=b-a+1$.
The \emph{interval module} $\F_I$ is $\F$ at the vertices of $I$ and zero
elsewhere, with the identity on every edge contained in $I$ and the zero map
on all other edges.
The interval decomposition theorem expresses every zigzag persistence module as a direct
sum of interval modules, with uniquely determined supports~\cite{zigzag}.
A persistence module typically arises as persistent homology of data, and this decomposition
allows it to be represented conveniently by a multiset of supports called the \emph{barcode}.

We consider a family $\Sigma$ of interval submodules of $V$ that spans the whole module, which we call an \emph{interval cover}.
The cover may have redundancy in the form of linear relations,
and the most efficient one is identified with an interval decomposition.
Namely, write $n_r(\Sigma)$ for the number of its members
containing vertex $r$, and let $\|\Sigma\|=\sum_r n_r(\Sigma)$ be its total
support length.
Our main result \Cref{thm:descent} shows that for any cover satisfying
$\|\Sigma\|>\dim V:=\sum_{r=1}^{M}\dim_\F V_r$,
we can construct a cover with a smaller total support length.
Every interval cover satisfies $\|\Sigma\|\ge\dim V$,
and equality holds precisely for decompositions.
Thus this descent starting from any $\Sigma$ reaches a decomposition
in at most $\|\Sigma\|-\dim V$ steps.
In \Cref{thm:uniqueness} we prove the uniqueness of the barcode
by showing that
for two arbitrary interval covers
with different supports, at least one of them can be shortened.
In \Cref{thm:local-rank} we give
an estimate of the generalised rank invariant using an interval cover.
\medskip

The decomposition theorem is a special case of Gabriel's classification of
representations of type-$A$ quivers~\cite{gabriel1972} and various proofs are known~\cite{zigzag,ringel2013}.
A more general decomposition theorem into
indecomposables for pointwise finite-dimensional persistence
modules and short proofs of interval and block
decomposition theorems are obtained in~\cite{botnan2020decomposition}.
Our key idea is to characterise the interval decompositions
as the minimisers of the total support length over
possibly redundant interval covers.
We believe
interval covers and their redundancy provide a new perspective on the structure of zigzag persistence modules.

\section{Optimisation of interval covers}\label{sec:covers}

We call
a morphism $s\in\operatorname{Hom}(\F_I,V)$ for some interval $I$
a \emph{generalised interval section} of $V$.
Equivalently, it is a family of vectors $s_r\in V_r$ for $r\in I$
which is carried compatibly along every edge inside $I$ and is carried to
zero across every boundary edge directed out of $I$.
Note that no condition is imposed at a boundary edge directed into $I$.
For an interval $L$ and a subinterval $I\subseteq L$, write $\operatorname{Hom}_L$ for morphisms over the restriction $Q|_L$ and call
$s\in\operatorname{Hom}_L(\F_I,V|_L)$
a generalised interval section on $V|_L$. In this case, no conditions are imposed along the edges leaving $L$.

A monomorphism $s\colon\F_I\to V$ is called an \emph{interval section}.
In this case its image, denoted by $\langle s\rangle$, is a submodule of $V$ isomorphic to $\F_I$.

A finite family $\Sigma=\{(I_j,s_j)\}_{j=1}^{m}$ of interval sections is an
\emph{interval cover} if the induced morphism
\[
             \Phi_\Sigma\colon\bigoplus_{j=1}^{m}\F_{I_j}\longrightarrow V
\]
is surjective.
Its \emph{size} and its \emph{support} are
\[
     \|\Sigma\|=\sum_{j=1}^{m}|I_j|,
     \qquad
     \Supp(\Sigma)=\{I_1,\ldots,I_m\}\ \text{ (a multiset)}.
\]

At a vertex $r$, the vectors
$\{s_{j,r}\mid r\in I_j\}$
span $V_r$, so their number
\[
 n_r(\Sigma)=\#\{j\mid r\in I_j\}
\]
is at least $\dim_\F V_r$, and we have
\begin{equation}\label{eq:redundancy}
 \|\Sigma\|
 =\sum_{r=1}^{M}n_r(\Sigma)
 \ge\sum_{r=1}^{M}\dim_\F V_r=\dim V.
\end{equation}
Equality holds exactly when the section values form a basis at every vertex,
or equivalently when $\Phi_\Sigma$ is an isomorphism, giving an interval decomposition.
We call
$\varepsilon_r(\Sigma)=n_r(\Sigma)-\dim_\F V_r=\dim_\F\ker(\Phi_\Sigma)_r$
the \emph{redundancy} of $\Sigma$ at $r$.
For a set $X$ of vertices we write
$\varepsilon_X(\Sigma)=\sum_{r\in X}\varepsilon_r(\Sigma)$, and
$\varepsilon(\Sigma)=\|\Sigma\|-\dim V$ is the total redundancy.

\begin{thm}\label[thm]{thm:descent}
Let $\Sigma$ be an interval cover of $V$ with $\varepsilon_q(\Sigma)>0$.
Then there is an interval cover $\Sigma'$ with
\[
 n_r(\Sigma')\le n_r(\Sigma)\quad\text{for every vertex }r,
 \qquad
 n_q(\Sigma')\le n_q(\Sigma)-1.
\]
In particular $\|\Sigma'\|\le\|\Sigma\|-1$.
\end{thm}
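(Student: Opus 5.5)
The plan is to turn a linear relation at $q$ into a modification of a single member of $\Sigma$. That member gets cut at $q$ and kept only on the two sides of $q$, while the span at every vertex is preserved. Since $\varepsilon_q(\Sigma)>0$, the vectors $\{s_{j,q}\mid q\in I_j\}$ are linearly dependent. So there is a set $J$ of indices with $q\in I_j$ for $j\in J$ and nonzero scalars $c_j$ such that $\sum_{j\in J}c_js_{j,q}=0$. Write $I_j=[a_j,b_j]$. I will treat the part of the intervals to the left of $q$ and the part to the right of $q$ separately, by symmetric arguments. The new cover $\Sigma'$ is obtained by deleting one section $s_{j_0}$ with $j_0\in J$ and inserting at most two sections: one supported in $[a_{j_0},q-1]$ and one in $[q+1,b_{j_0}]$. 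If a piece turns out to be zero or linearly redundant, I drop it or replace it by a shorter interval section. This gives $n_r(\Sigma')\le n_r(\Sigma)$ everywhere and $n_q(\Sigma')=n_q(\Sigma)-1$.

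The basic tool is the following elementary observation, which I will state and check first. For an interval $[a,b]\ni q$ and $a\le a'\le q$, the restriction of a generalised section on $[a,b]$ to $[a',q-1]$ is a generalised section on $V$ in either of two cases: $a'=a$, or the edge between $a'-1$ and $a'$ points into $a'$. In addition, the right boundary edge between $q-1$ and $q$ causes no trouble as long as the value at $q$ is $0$. The reason is that an edge $q-1\to q$ then carries the restriction to $0$, and an edge $q\to q-1$ imposes no condition. Sums of generalised sections on a common interval are again generalised sections.

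Using this, I order the left endpoints $\{a_j\}_{j\in J}$ by the type-$A$ rule: $I_j$ lies \emph{below} $I_k$ on the left if restricting $s_k$ to $[a_j,q]$ gives a generalised section. This holds when $a_j\ge a_k$ and the edge at $a_j$ points inward, or when $a_j=a_k$. I then choose $j_L\in J$ minimal for this order, and form
\[
 t^L=\sum_{j\in J}\tfrac{c_j}{c_{j_L}}\,s_j\big|_{[a_{j_L},q-1]} .
\]
By construction $t^L$ is a generalised section with $t^L_q$-component zero. I choose $j_R$ and form $t^R$ symmetrically. Then $s_{j_0}$ is recovered on each side from $t^L$ (resp.\ $t^R$) and the remaining $s_j$. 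Hence, after replacing $s_{j_0}$ by the pieces $t^L,t^R$, the image of $\Phi$ is unchanged at every vertex $r\neq q$. At $q$ the image is unchanged because $s_{j_0,q}$ lies in the span of the other $s_{j,q}$.

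The main obstacle is that $j_L$ and $j_R$ need not coincide, and the minimal element for the left order need not exist in the naive sense when the endpoint orientations conflict. For instance, the interval with the largest $a_j$ may have an outward-pointing left edge while others extend further. In that situation the truncated combination is not a morphism out of an interval module. My first attempt is to avoid cutting one section in two. Instead I use $J$-combinations to replace $s_{j_L}$ only on its left part: the new section is $t^L$ on $[a_{j_L},q-1]$ together with the untouched right part of $s_{j_L}$, glued only if compatible. I do the same with $s_{j_R}$ on the right. If gluing fails, I perform a two-step descent: the left operation alone already decreases $n_q$ by one after I adjust the right part using the relation. The key point to verify is that a minimal element for each side's order always exists among $J$. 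This is the type-$A$ fact that $\operatorname{Hom}(\F_I,\F_{I'})\neq0$ defines a partial order whose restriction to intervals containing $q$ with a common right part is total. Finally, if a resulting $t^L$ is not a monomorphism, I shrink it to the largest subinterval on which it is nonzero. This is again an interval section, and shrinking only lowers the counts $n_r$.
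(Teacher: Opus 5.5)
\textbf{There is a genuine gap.} Every operation you allow produces sections whose supports lie inside the original supports. These operations are: deleting one member $s_{j_0}$ and inserting sections supported in $[a_{j_0},q-1]$ and $[q+1,b_{j_0}]$; keeping an ``untouched right part'' of $s_{j_L}$; and shrinking to pieces. Staying inside the original supports cannot suffice.

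Take $1\to2\leftarrow3$, $V=\F_{[1,3]}$, and the cover by sections on $[1,2]$ and $[2,3]$ with values $1$ and $-1$ at $q=2$ (the first cover of Example~\ref{ex:rank-sharp}). Here $\|\Sigma\|=4$ and $\dim V=3$. So any admissible $\Sigma'$ is a decomposition, hence a single section on $[1,3]$, which lies in neither original support. Concretely, deleting $s_1$ would require a nonzero section on $[1,1]$. That is impossible because the outgoing edge $1\to2$ is injective, and the same holds symmetrically for $s_2$.

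Your construction does work when a single $j_0\in J$ is extremal for both one-sided preorders, i.e.\ $I_j\preceq I_{j_0}$ for all $j\in J$. Then $t^L$ and $t^R$ are genuine sections and the cut is valid. In general, however, $j_L\ne j_R$. Your fallback paragraph (``glued only if compatible'', ``two-step descent'') does not describe a construction, and by the example no repair that keeps new supports inside old ones can succeed. A smaller point: your initial ``below'' relation uses only restriction. A transfer may also extend by zero across an endpoint edge pointing outward, and this is what makes the one-sided preorders total.

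\textbf{The missing idea} is to re-pair the left half of one section with the right half of another. The paper inducts on the length $m$ of a relation $s_{1,q}+\cdots+s_{m,q}=0$.

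If some $I_i\preceq I_j$ with $i\ne j$, it replaces $s_j$ by $s_j+s_i^{(j)}$ on the same support. This either lowers $n_q$ at once or shortens the relation, and it subsumes your case $j_L=j_R$.

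Otherwise the $I_i$ are pairwise incomparable, so $\preceq_{[1,q]}$ and $\preceq_{[q,M]}$ induce opposite total orders on them. After labelling accordingly, form the cumulative sums $L_j=\sum_{h\le j}s_h^{(j)}$ on $[a_j,q]$ and $R_j=-\sum_{h\ge j}s_h^{(j)}$ on $[q,b_j]$. They satisfy $L_{j,q}=R_{j+1,q}$, so they glue to sections $G_j$ on $[a_j,b_{j+1}]$. This support is generally not contained in any $I_i$. Meanwhile $L_m$ and $R_1$ vanish at $q$ and are cut there. At every $r\ne q$ this is a unitriangular change of the section values. So the spans are preserved and only $m-1$ sections pass through $q$. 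In the example above, $G_1$ is exactly the section on $[1,3]$.
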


Any nonzero vector $v\in V_r$ can be propagated along the outgoing
paths, stopping along a
path either when the next edge points back towards $r$ or when the image
becomes zero, to make an interval section. So interval covers exist for any $V$ and
\begin{cor}\label[cor]{cor:decomposition}
Every finite zigzag persistence module $V$ has a decomposition
\[
V\cong\bigoplus_{j=1}^{m}\F_{I_j}.
\]
More precisely,
\[
       \min_{\Sigma\text{ an interval cover of }V}\|\Sigma\|=\dim V,
\]
and an interval cover $\Sigma=\{(I_j,s_j)\}_{j=1}^{m}$ is a minimiser if and
only if it gives an interval decomposition
$V=\bigoplus_{j=1}^{m}\langle s_j\rangle$.
\end{cor}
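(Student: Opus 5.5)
The corollary follows from \Cref{thm:descent} and inequality~\eqref{eq:redundancy}, once we know that interval covers exist. The plan has three steps.

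\textbf{Existence of a cover.} For each vertex $r$, pick a basis of $V_r$. For each basis vector $v$, build an interval section $s$ with $s_r=v$ by propagation. Starting at $r$, move right. If the edge $r\to r+1$ points forward, push $v$ along it. Continue while the edges point forward and the image stays nonzero. Stop at the last vertex $b$ with nonzero value, or just before the first backward edge.

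The defining conditions then need checking at the right endpoint $b$:
\begin{itemize}
\item if we stopped because the image became zero, the edge out of $I$ is directed outward and kills $s_b$, as required;
\item if we stopped at a backward edge, that edge is directed into $I$, and no condition is imposed.
\end{itemize}
The left side is symmetric. All values are images of $v$ along forward paths, so compatibility inside $I$ holds. The morphism is a monomorphism because every value is nonzero by construction. Taking all these sections over all $r$ and all basis vectors gives a finite family that spans every $V_r$, so $\Phi_\Sigma$ is surjective. Hence the set of interval covers is nonempty.

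\textbf{Existence of a minimiser.} Sizes are positive integers, so a cover $\Sigma$ of minimal size exists. Suppose $\|\Sigma\|>\dim V$. Then $\varepsilon(\Sigma)>0$, so $\varepsilon_q(\Sigma)>0$ for some vertex $q$. \Cref{thm:descent} now gives a cover $\Sigma'$ with $\|\Sigma'\|\le\|\Sigma\|-1$, contradicting minimality. Combined with~\eqref{eq:redundancy}, this shows the minimum equals $\dim V$. Equivalently, iterating the theorem from any cover reaches size $\dim V$ in at most $\varepsilon(\Sigma)$ steps.

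\textbf{Characterising minimisers.} A cover has $\|\Sigma\|=\dim V$ if and only if $n_r(\Sigma)=\dim V_r$ for every $r$. Indeed, $n_r\ge\dim V_r$ termwise, so equality of the sums forces equality at each vertex. Since the section values already span $V_r$, this means they form a basis of $V_r$. Then $\Phi_\Sigma$ is a vertexwise isomorphism, hence an isomorphism of representations. So $V=\bigoplus_j\langle s_j\rangle$, and each $\langle s_j\rangle\cong\F_{I_j}$ because $s_j$ is a monomorphism.

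Conversely, suppose $V=\bigoplus_j\langle s_j\rangle$ is an interval decomposition by interval sections. Then $\Phi_\Sigma$ is an isomorphism, so $n_r(\Sigma)=\dim V_r$ at each $r$, and $\|\Sigma\|=\dim V$. The decomposition $V\cong\bigoplus\F_{I_j}$ is then read off from any minimiser.

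The only step needing real care is the boundary check in the propagation construction. All of the substantive work is contained in \Cref{thm:descent}, which we are allowed to assume.
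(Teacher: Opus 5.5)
Your proof is correct and follows the paper's own route. The paper likewise builds a cover by propagating nonzero vectors along outgoing edges, stopping when the next edge points back or the image vanishes, and then reads off the corollary from \Cref{thm:descent} together with \eqref{eq:redundancy} and its equality case, as you do (one trivial nit: sizes are nonnegative rather than positive integers, since $V=0$ has the empty cover).
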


The proof uses two operations on generalised interval sections.
First, restricting a generalised interval section to
the maximal subintervals where its values are nonzero gives interval sections,
called its \emph{pieces} (since any edge leaving a piece inside $I$ lands at a zero value).
Second, to add two sections, we give them a common domain by
precomposing with morphisms between interval modules, which we now describe.

Let $I=[a,b]$ and $J=[c,d]$.
Commutation with the identity structure maps shows that a morphism
$\F_I\to\F_J$ is multiplication by a single scalar on $I\cap J$ and zero
elsewhere.
If the scalar is nonzero, normalise it to $1$. Commutation at the left
boundary of $I\cap J$ then allows only the squares
\[
\begin{tikzcd}[column sep=large,row sep=small]
a-1 & a \\
0 \arrow[r,"0"] \arrow[d,"0"']
  & \F \arrow[d,"\id"] \\
\F \arrow[r,"\id"'] & \F
\end{tikzcd}
\quad (a>c),
\qquad
\begin{tikzcd}[column sep=large,row sep=small]
c-1 & c \\
\F \arrow[d,"0"']
  & \F \arrow[l,"\id"'] \arrow[d,"\id"] \\
0 & \F \arrow[l,"0"]
\end{tikzcd}
\quad (a<c).
\]
Intuitively, $0$ can only be initial or terminal in a commutative square.
The right boundary is analogous.
Accordingly, we define $J\preceq I$ if
\begin{equation}\label{eq:order}
\begin{aligned}
a>c&\ \Longrightarrow\ a-1\to a,
& a<c&\ \Longrightarrow\ c\to c-1,\\
b<d&\ \Longrightarrow\ b+1\to b,
& b>d&\ \Longrightarrow\ d\to d+1,
\end{aligned}
\end{equation}
that is, wherever the endpoints differ, the edge just outside the inner
endpoint points towards the endpoint of $I$.
When all arrows point right, $[c,d]\preceq[a,b]$ means $c\le a$ and
$d\le b$.
Thus a nonzero morphism $\F_I\to\F_J$ exists exactly when
$I\cap J\ne\emptyset$ and $J\preceq I$, and we write $\phi_{I,J}$ for the
one equal to the identity on $I\cap J$.
In particular $\operatorname{End}(\F_I)=\F$.
The relation $\preceq$ is a partial order, and it is total on intervals with
a common left endpoint or a common right endpoint.
Indeed, the left endpoint conditions in \eqref{eq:order} define a total
order: insert the endpoints $a=1,\ldots,M$ successively, placing each
$a>1$ after all preceding endpoints if $a-1\to a$, and before them otherwise.
Similarly, insert the right endpoints $b=M,\ldots,1$ successively, placing
each $b<M$ before all preceding endpoints if $b\to b+1$, and after them
otherwise.  The relation $\preceq$ is the restriction of the product of
these two total orders to the pairs $(a,b)$ with $a\le b$.

For a subpath $L$ and intervals $I$ and $J$ meeting $L$, write
$J\preceq_L I$ if $J\cap L\preceq I\cap L$.
For intervals through a vertex $q$, the relation $\preceq_{[1,q]}$ compares
only their left endpoints and $\preceq_{[q,M]}$ only their right endpoints;
both are total preorders, and their intersection is $\preceq$.
Let $L$ be $[1,q]$, $[q,M]$, or $[1,M]$.
If $J\preceq_L I$, a generalised section $s\colon\F_{J\cap L}\to V|_L$
transfers to $I\cap L$ as $s\circ\phi_{I\cap L,J\cap L}$, whose values are
$s_r$ on $I\cap J\cap L$ and zero elsewhere; in particular, the value at $q$
is unchanged.
For members $s_i$ and $s_j$ through $q$ with $I_i\preceq_L I_j$, we write
$s_i^{(j)}$ for the transfer of $s_i|_L$ to $I_j\cap L$.

\begin{proof}[Proof of \Cref{thm:descent}]
By scaling, we assume a dependency at $V_q$ is of the form
\begin{align}\label{eq:thm2.2relation}
       s_{1,q}+\cdots+s_{m,q}=0, \quad (m\ge 2)
\end{align}
among distinct sections $s_i$ in $\Sigma$ supported on $I_i=[a_i,b_i]\ni q$.
We argue by induction on $m$.
Below, 
sections in $\Sigma$ outside the relation \eqref{eq:thm2.2relation} are retained, and
every new section below is replaced by its pieces.

\smallskip
\noindent\emph{Case 1: $I_i\preceq I_j$ for some $i\ne j$.}
Replace $s_j$ by $t=s_j+s_i^{(j)}$ on $I_j$.
Since $s_i$ is retained, the spans are unchanged.
If $t_q=s_{i,q}+s_{j,q}$ vanishes, this new
cover $\Sigma'$ has a smaller $n_q$.
Otherwise, the piece of $t$ containing $q$ and the $s_h$ with $h\ne i,j$
satisfy a relation with $m-1$ terms at $q$, and the induction hypothesis
applied to the new cover gives $\Sigma'$.

\smallskip
\noindent\emph{Case 2: otherwise.}
Then $I_1,\ldots,I_m$ are pairwise incomparable under $\preceq$, so
$\preceq_{[1,q]}$ and $\preceq_{[q,M]}$ restrict to opposite strict total
orders on them.  We label them so that
\[
 I_h\prec_{[1,q]}I_j\quad\text{and}\quad I_j\prec_{[q,M]}I_h
 \qquad(h<j),
\]
and, using the transfers on $[1,q]$ and $[q,M]$ respectively, form the cumulative sums
\[
 L_j=\sum_{h\le j}s_h^{(j)}\quad\text{on }[a_j,q],
 \qquad
 R_j=-\sum_{h\ge j}s_h^{(j)}\quad\text{on }[q,b_j].
\]
By the relation \eqref{eq:thm2.2relation},
\[
 L_{j,q}=R_{j+1,q}\quad(1\le j<m),
 \qquad
 L_{m,q}=R_{1,q}=0.
\]
Hence $L_j$ and $R_{j+1}$ glue to a morphism
$G_j\colon\F_{[a_j,b_{j+1}]}\to V$ for $1\le j<m$, and since $L_{m,q}=R_{1,q}=0$, the restrictions of
$L_m$ and $R_1$ to the possibly empty intervals $[a_m,q-1]$ and
$[q+1,b_1]$ are also morphisms.
Replace $s_1,\ldots,s_m$ by $G_1,\ldots,G_{m-1}$, $L_m|_{[a_m,q-1]}$, and
$R_1|_{[q+1,b_1]}$.
At each vertex $r\ne q$ this is an invertible unitriangular change of
$\{s_{j,r}\mid r\in I_j\}$, so the spans are unchanged and no $n_r$ increases, while
only the $m-1$ sections $G_j$ pass through $q$.  This gives $\Sigma'$.
\end{proof}

The construction is illustrated in \Cref{fig:gluing}.
\begin{figure}[htbp]
\centering
\begin{tikzpicture}[
  x=1.08cm,y=.62cm,font=\small,
  sone/.style={blue!68!black,line width=1.7pt,line cap=round},
  stwo/.style={orange!85!black,line width=1.7pt,line cap=round},
  sthree/.style={teal!72!black,line width=1.7pt,line cap=round},
  pairone/.style={violet!75!black,line width=1.9pt,line cap=round},
  pairtwo/.style={green!48!black,line width=1.9pt,line cap=round},
  vertex/.style={circle,fill=black,inner sep=1.05pt},
  qline/.style={gray!65,densely dashed},
  heading/.style={font=\small\bfseries,anchor=west}
]

\node[heading] at (-.65,9.75) {1. Order the supports at $q=3$};
\foreach \x/\r in {0/1,1/2,2/3,3/4,4/5} {
  \node[vertex,label=above:$\r$] at (\x,8.35) {};
}
\draw[->] (.1,8.35)--(.9,8.35);
\draw[->] (1.1,8.35)--(1.9,8.35);
\draw[->] (2.9,8.35)--(2.1,8.35);
\draw[->] (3.9,8.35)--(3.1,8.35);
\draw[qline] (2,5.6)--(2,8.05);

\draw[sone] (0,7.45)--(2,7.45);
\foreach \x in {0,1,2} \fill[blue!68!black] (\x,7.45) circle (1.8pt);
\node[anchor=east] at (-.15,7.45) {$s_1$};
\node[anchor=south west,inner sep=1pt] at (2.05,7.45) {$u$};

\draw[stwo] (1,6.75)--(3,6.75);
\foreach \x in {1,2,3} \fill[orange!85!black] (\x,6.75) circle (1.8pt);
\node[anchor=east] at (.85,6.75) {$s_2$};
\node[anchor=south west,inner sep=1pt] at (2.05,6.75) {$v$};

\draw[sthree] (2,6.05)--(4,6.05);
\foreach \x in {2,3,4} \fill[teal!72!black] (\x,6.05) circle (1.8pt);
\node[anchor=east] at (1.85,6.05) {$s_3$};
\node[anchor=south west,inner sep=1pt] at (2.05,6.05) {$w$};

\node[anchor=west] at (5.15,8.3)
  {$I_1=[1,3],\quad I_2=[2,4],\quad I_3=[3,5]$};
\node[anchor=west] at (5.15,7.45) {$u+v+w=0$ in $V_3$};
\node[anchor=west] at (5.15,6.7)
  {$I_1\prec_{[1,3]}I_2\prec_{[1,3]}I_3$};
\node[anchor=west] at (5.15,6.05)
  {$I_3\prec_{[3,5]}I_2\prec_{[3,5]}I_1$};

\node[heading] at (-.65,4.95) {2. Cut at $q=3$ and take cumulative sums};
\draw[qline] (2,2.15)--(2,4.35);

\draw[pairone] (0,4.05)--(2,4.05);
\draw[pairone,dashed] (2,4.05)--(3,4.05);
\foreach \x in {0,1,2,3} \fill[violet!75!black] (\x,4.05) circle (1.8pt);
\node[anchor=east] at (-.15,4.05) {$L_1$};
\node[anchor=west] at (3.15,4.05) {$R_2$};

\draw[pairtwo] (1,3.15)--(2,3.15);
\draw[pairtwo,dashed] (2,3.15)--(4,3.15);
\foreach \x in {1,2,3,4} \fill[green!48!black] (\x,3.15) circle (1.8pt);
\node[anchor=east] at (.85,3.15) {$L_2$};
\node[anchor=west] at (4.15,3.15) {$R_3$};

\node[anchor=west] at (5.15,4.15)
  {$L_{1,3}=R_{2,3}=u$};
\node[anchor=west] at (5.15,3.35)
  {$L_{2,3}=R_{3,3}=u+v=-w$};
\node[anchor=west] at (5.15,2.55)
  {$L_{3,3}=R_{1,3}=0$};

\node[heading] at (-.65,1.75) {3. Glue the matching values at $q$};
\draw[qline] (2,-.35)--(2,1.15);
\draw[pairone] (0,.85)--(3,.85);
\foreach \x in {0,1,2,3} \fill[violet!75!black] (\x,.85) circle (1.8pt);
\node[anchor=east] at (-.15,.85) {$[1,4]$};
\draw[pairtwo] (1,.05)--(4,.05);
\foreach \x in {1,2,3,4} \fill[green!48!black] (\x,.05) circle (1.8pt);
\node[anchor=east] at (.85,.05) {$[2,5]$};
\node[anchor=west] at (5.15,.45)
  {$3+3+3=9\quad\longmapsto\quad4+4=8$};
\end{tikzpicture}
\caption{
Let $s_1,s_2,s_3$ be interval sections on $I_1=[1,3]$, $I_2=[2,4]$,
and $I_3=[3,5]$ whose values $u,v,w$ at $q=3$ satisfy $u+v+w=0$.
The supports are pairwise incomparable, with
$I_1\prec_{[1,3]}I_2\prec_{[1,3]}I_3$ and
$I_3\prec_{[3,5]}I_2\prec_{[3,5]}I_1$, so Case~2 of the proof of
\Cref{thm:descent} applies with this labelling.
The pairs $L_1,R_2$ and $L_2,R_3$ have matching values at $3$ and glue to
sections on $[1,4]$ and $[2,5]$, while $L_3$ and $R_1$ vanish.
The total size decreases from $9$ to $8$.
}
\label{fig:gluing}
\end{figure}

\begin{ex}\label[ex]{ex:forward-descent}
The argument gives a particularly simple proof for 
an ordinary persistence module $V_1\to\cdots\to V_M$, where
all arrows point right.
Write $h_r\colon V_r\to V_{r+1}$, set $V_0=V_{M+1}=0$ and $h_0=h_M=0$,
and let $h_{q,r}$ denote the composite from $r$ to $q$, with $h_{r,r}=\id$.
For $0\ne v\in V_r$, define its \emph{remaining lifetime} by
\[
 \ell(v)=\min\{1\le k\le M+1-r\mid h_{r+k,r}v=0\},\qquad \ell(0)=0.
\]
The nonzero images of $v$ form an interval section on $[r,r+\ell(v)-1]$, called the \emph{orbit} of $v$; conversely, every interval section is the orbit of its initial value.

Here \Cref{thm:descent} reduces to shortening one orbit, without gluing.
Given a dependence $\sum_i c_i h_{q,r_i}v_i=0$ among nonzero orbit values
at $q$, with all $c_i\ne0$, choose $j$ with largest $r_j$ and replace $v_j$ by
\[
 \bar v_j=\sum_i c_i h_{r_j,r_i}v_i,
 \qquad \ell(\bar v_j)\le q-r_j<\ell(v_j).
\]
The orbit cover still spans with a shorter remaining lifetime: the replacement is an invertible change of generators since $c_j\ne0$.

We can give an interpretation of the elder rule~\cite{cai2021elder}
as a decrease of total support length as well.
For a merger $h_{q,a}u=h_{q,b}v\ne0$ with $a<b$, the above replaces $v$ by
$v-h_{b,a}u$ so the older orbit is retained and the younger one ends before $q$.

We can also identify some splitting components from the minimisation viewpoint.
Fix a nonzero class in $V_r/\im h_{r-1}$ and choose
a representative $v$ of least remaining lifetime.  
We show its orbit $s$ splits off by
proving
\begin{equation}\label{eq:forward-splitting}
 \min_{\Sigma\text{ an interval cover of }V\text{ containing }s}\|\Sigma\|=\dim V.
\end{equation}
Minimise $\|\Sigma\|$ among covers containing $s$.
If there is a dependence, it must involve $s$. Propagate it to $d=r+\ell(v)-1$ and discard zero terms.
If all other surviving terms start before $r$, their combination shortens
$v$ within its fixed class, a contradiction.
Otherwise, choose another term of latest birth (necessarily at least $r$)
and shorten it at $d$.
This descent preserves $s$ and contradicts minimality.

For example, every nonzero vector at the first nonzero vertex ($h_{r-1}=0$), and 
a nonzero vector of maximal remaining lifetime among all vectors
(if an element of $\im h_{r-1}$ shortens the remaining lifetime of $v$,
a preimage of that element in $V_{r-1}$ would have a longer remaining lifetime)
satisfy the condition.

The splitting does not hold for zigzag.
On $1\leftarrow2\to3$, take
$V=\F_{[1,2]}\oplus\F_{[2,3]}$ with the coordinate projections
$\F^2\to\F$.  The values $1,(1,1),1$ form a section on $[1,3]$, which has the largest support.
A complement would be supported at $2$, but the projections have zero
common kernel.
\end{ex}

\section{Uniqueness of interval supports}\label{sec:uniqueness}

We prove the uniqueness of the multiset of interval supports by showing any two covers with different supports must have a redundancy somewhere.

\begin{thm}\label[thm]{thm:uniqueness}
Let $\Sigma$ and $\mathcal T$ be interval covers of $V$ with
$\Supp(\Sigma)\ne\Supp(\mathcal T)$.
Then at least one of them can be replaced by a cover of strictly smaller size.
Consequently, all minimal interval covers of $V$ have the same multiset of
supports.
\end{thm}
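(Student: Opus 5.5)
The plan has two steps. First, if either $\Sigma$ or $\mathcal T$ has positive redundancy $\varepsilon_q>0$ at some vertex $q$, then \Cref{thm:descent} already shortens it. Second, it remains to show that $\Supp(\Sigma)=\Supp(\mathcal T)$ whenever $\varepsilon(\Sigma)=\varepsilon(\mathcal T)=0$. In that case both covers are interval decompositions $V=\bigoplus\langle s_j\rangle=\bigoplus\langle t_k\rangle$, by the equality case of \eqref{eq:redundancy}. The ``consequently'' part then follows at once: minimal covers have size $\dim V$ by \Cref{cor:decomposition}, so neither can be shortened, and hence their supports agree. So the real content is a uniqueness statement for decompositions. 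I will prove it by exhibiting, for every interval, an intrinsic number attached to $V$ from which all multiplicities can be read off.

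For an interval $I$, let $\rho_V(I)$ be the rank of the natural map $\lim V|_I\to\operatorname{colim}V|_I$ over the restricted path $Q|_I$. Concretely, $\lim V|_I=\operatorname{Hom}_I(\F_I,V|_I)$ is the space of families carried compatibly along all edges of $I$. The map to the colimit sends such a family to its common class, so it factors through evaluation at any vertex followed by the canonical map to the colimit. The number $\rho_V(I)$ depends only on $V$, and it is additive under direct sums because limits, colimits and the natural map all commute with finite direct sums. Therefore, for a decomposition $V\cong\bigoplus_j\F_{I_j}$,
\[
\rho_V(I)=\sum_j\rho_{\F_{I_j}}(I).
\]

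The key local computation is that $\rho_{\F_J}(I)=1$ if $J\supseteq I$ and $\rho_{\F_J}(I)=0$ otherwise. If $J\supseteq I$, then $\F_J|_I$ is the constant module $\F$ with identity maps, and the map $\lim\to\operatorname{colim}$ is an isomorphism $\F\to\F$. If $J\cap I=\emptyset$, the module $\F_J|_I$ is zero. In the remaining case, $K=J\cap I$ is a nonempty proper subinterval of $I$, so some edge $e$ of $I$ joins a vertex of $K$ to a vertex outside $K$.
\begin{itemize}
\item If $e$ points out of $K$, a compatible family must carry its value on $K$ to $0$, so $\lim=0$.
\item If $e$ points into $K$, then the value at the vertex of $K$ is the image of a value at the zero space just outside $K$, so every class in $\operatorname{colim}$ coming from $K$ vanishes.
\end{itemize}
Either way the composite is zero. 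I expect this to be the main point needing care: one must check that the limit and colimit conventions match the zigzag orientation exactly. Once it is done, we obtain
\[
\rho_V(I)=\#\{j\mid I_j\supseteq I\},
\]
the same for every decomposition.

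Finally, recover multiplicities by inclusion--exclusion. For $I=[a,b]$, the intervals $J$ containing $I$ but neither $[a-1,b]$ nor $[a,b+1]$ are exactly $J=I$ (with terms for out-of-range extensions set to $0$). Hence the multiplicity of $I$ in any decomposition is
\[
\rho_V([a,b])-\rho_V([a-1,b])-\rho_V([a,b+1])+\rho_V([a-1,b+1]).
\]
This expression is intrinsic to $V$, so any two decompositions have the same multiset of supports. Combined with the reduction in the first paragraph, this proves the theorem.
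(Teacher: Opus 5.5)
Your proof is correct in outline, but it takes a different route from the paper.

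\textbf{How the two routes differ.} The paper never reduces to two decompositions. It chooses a $\preceq$-minimal interval $I$ with $m_I>n_I$ and assumes only that $\mathcal T$ is irredundant. It then uses that $\Phi_{\mathcal T}^{-1}\circ s_j$ has nonzero components only on those $J_k$ with $J_k\preceq I_j$. By transitivity of $\preceq$, at each $q\in I$ the values $s_{j,q}$ with $I_j\preceq I$ lie in the span of the independent family $\{t_{k,q}\mid J_k\preceq I\}$. That family has $m_I-n_I$ fewer members, so $\Sigma$ is dependent at $q$.

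That argument stays inside the Hom-triangularity developed in \Cref{sec:covers} and needs no limits or colimits. It also locates the redundancy: $\Sigma$ is redundant at every vertex of $I$.

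Your route is the classical ``rank invariant determines the barcode'' argument. Its key input is that $\rho_I(\F_J)=1$ exactly when $I\subseteq J$, which the paper only establishes in \Cref{sec:rank-bound}. What it buys is an explicit, intrinsic inclusion--exclusion formula for each multiplicity, phrased via containment of intervals. What it gives up is the localisation: it only shows, by contradiction, that one of the two covers must be redundant, not where.

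\textbf{A slip at the step you flagged.} Your two bullets attach the wrong conclusion to each case.
\begin{itemize}
\item Take an edge $e\colon u\to w$ with $u\in K$ and $w\notin K$. The compatibility condition $h_e(x_u)=x_w$ reads $0=0$, so it imposes nothing on $x_u$. What vanishes is the colimit, because the relation $[x_u]=[h_e(x_u)]=0$ kills the class coming from $u$, hence all classes from $K$.
\item Take an edge $w\to u$ pointing into $K$. Compatibility forces $x_u=h_e(x_w)=0$, and the identity edges inside $K$ propagate this, so the limit vanishes. The colimit relation is vacuous here.
\end{itemize}
For a concrete check, on $1\to 2$ the restriction of $\F_{[1,1]}$ to $[1,2]$ has limit $\F$ and colimit $0$, contradicting your first bullet.

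Each case still kills one end of the map $\lim\to\operatorname{colim}$, and at least one case occurs. So your conclusion that the composite is zero survives once the two justifications are swapped. This agrees with the paper's statement that $\Gamma_J(\F_I)\ne0$ iff $I\preceq_J J$ and $C_J(\F_I)\ne0$ iff $J\preceq_J I$. With that correction, your inclusion--exclusion argument is a valid alternative proof.
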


\begin{proof}
Let $\Sigma=\{(I_j,s_j)\}$ and $\mathcal T=\{(J_k,t_k)\}$ be interval covers
with their respective multiplicities $m_K,n_K$ of an interval $K$.
Choose a $\preceq$-minimal interval $I$ with $m_I\ne n_I$. We can assume $m_I>n_I$.
If $\mathcal T$ has redundancy, \Cref{thm:descent} shortens $\mathcal T$.
So we assume $\mathcal T$ has no redundancy.
For any $q\in I$, consider two families
\[
 \{s_{j,q}\mid q\in I_j,\ I_j\preceq I\}, \quad
 \{t_{k,q}\mid q\in J_k,\ J_k\preceq I\}.
\]
Since $\Phi_{\mathcal T}$ is an isomorphism, each $s_j\in\Sigma$ gives a morphism
$\Phi_{\mathcal T}^{-1}\circ s_j\colon\F_{I_j}\to \bigoplus_k\F_{J_k}$,
and its $k$th component is nonzero
only if $J_k\preceq I_j$.
By transitivity of $\preceq$, the span of the first family is contained in the span of the second.
By the choice of $I$, the sizes of the above families differ by
\[
 \sum_{\substack{K\preceq I\\q\in K}}(m_K-n_K)=m_I-n_I>0.
\]
Since the second family is linearly independent, the first family
contains more vectors than the dimension of the span of the second,
making it dependent.
\end{proof}

The common multiset of supports, denoted by $\mathcal B(V)$, is called the \emph{barcode} of $V$.

\section{Estimating the rank invariant from interval covers}\label{sec:rank-bound}

We express the generalised rank invariant~\cite{kim2021generalized} through
interval covers, and bound it using the redundancy of a cover.
For an interval $J$, set
\[
 \Gamma_J(V)=\operatorname{Hom}_J(\F_J,V|_J),
 \qquad
 C_J(V)=\operatorname{Hom}_J(V|_J,\F_J)^*.
\]
Here $(-)^*$ denotes the vector-space dual.
Since $\operatorname{End}_J(\F_J)=\F$, composition defines a natural map
\[
 \lambda_J^V\colon\Gamma_J(V)\longrightarrow C_J(V),
 \qquad \lambda_J^V(s)(t)=t\circ s.
\]
Its rank $\rho_J(V)=\rank\lambda_J^V$ is called
the \emph{generalised rank invariant}~\cite{kim2021generalized}.
The spaces $\Gamma_J(V)$ and $C_J(V)$ identify naturally with the limit
and colimit of $V|_J$, respectively, and $\lambda_J^V$ is the usual
limit-to-colimit map.
In particular, when all arrows point right, $\rho_{[a,b]}(V)=\rank h_{b,a}$.

For an interval module $\F_I$ with $I\cap J\ne\emptyset$, we have
$\F_I|_J=\F_{I\cap J}$, so 
$\Gamma_J(\F_I)\ne0$ exactly when $I\preceq_J J$, and $C_J(\F_I)\ne0$
exactly when $J\preceq_J I$.
Hence $\rho_J(\F_I)=1$ if $J\subseteq I$ and zero otherwise.
Additivity gives
$\rho_J(\bigoplus_{j=1}^{m}\F_{I_j})=\#\{j\mid J\subseteq I_j\}$.

For a cover $\Sigma=\{(I_j,s_j)\}$, define the corresponding count
\[
 p_J(\Sigma)=\#\{j\mid J\subseteq I_j\}.
\]
In particular, $p_J(\Sigma)=\rho_J(V)$ for
any interval decomposition.
A redundant cover may under- or over-count.
We bound this error by the total redundancy:
\begin{equation}\label{eq:rank_error}
 |p_J(\Sigma)-\rho_J(V)|\le\varepsilon(\Sigma),
    \end{equation}
with sharper bounds involving only the sinks of $J$.

For $J=[a,b]$, let $S_J$ be the sinks of $Q|_J$, namely the vertices with
no outgoing edge in $Q|_J$, and let $S^\circ_J=S_J\setminus\{a,b\}$ be the
internal sinks.

\begin{thm}\label[thm]{thm:local-rank}
For every interval cover $\Sigma$ of $V$ and every interval $J$,
\begin{equation}\label{eq:local-rank}
 p_J(\Sigma)-\varepsilon_{S_J}(\Sigma)\le\rho_J(V)\le
 p_J(\Sigma)+\varepsilon_{S^\circ_J}(\Sigma).
\end{equation}
\end{thm}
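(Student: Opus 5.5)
The plan is to compare $V$ with the free-looking module $P=\bigoplus_j\F_{I_j}$ through the surjection $\Phi_\Sigma\colon P\to V$ and its kernel $K=\ker\Phi_\Sigma$, which satisfies $\dim K_r=\varepsilon_r(\Sigma)$. By the additivity computation preceding the theorem, $\rho_J(P)=p_J(\Sigma)$. Both $\Gamma_J$ and $C_J$ are functorial, and naturality of $\lambda$ gives
\[
C_J(\Phi)\circ\lambda_J^P=\lambda_J^V\circ\Gamma_J(\Phi).
\]
This identity is used for both inequalities. Throughout, I use the identifications of $\Gamma_J$ and $C_J$ with the limit and colimit of the restriction to $J$.

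\emph{Lower bound.} Colimits are right exact, so $C_J(K)\to C_J(P)\to C_J(V)\to0$ is exact. For a zigzag on $J$, the colimit is a quotient of $\bigoplus_{r\in S_J}(-)_r$, since every vertex maps to some sink. Hence
\[
\dim\ker C_J(\Phi)\le\dim C_J(K)\le\varepsilon_{S_J}(\Sigma).
\]
Then
\[
\rho_J(V)\ge\rank\bigl(\lambda_J^V\Gamma_J(\Phi)\bigr)=\rank\bigl(C_J(\Phi)\lambda_J^P\bigr)\ge\rank\lambda_J^P-\dim\ker C_J(\Phi),
\]
which yields $p_J(\Sigma)-\varepsilon_{S_J}(\Sigma)\le\rho_J(V)$.

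\emph{Upper bound.} Here the obstruction is that sections of $V|_J$ need not lift to sections of $P|_J$. I would write the limit of a representation $W$ of $Q|_J$ as the kernel of
\[
d_W\colon\bigoplus_{r\in J}W_r\to\bigoplus_{e}W_{t(e)},\qquad (w_r)\mapsto(h_e w_{s(e)}-w_{t(e)})_e,
\]
with $e$ running over the edges of $J$ and $t(e)$ the target of $e$. Set $\lim^1W=\operatorname{coker}d_W$. Applying the snake lemma to the short exact sequence $0\to K\to P\to V\to0$ of these two-term complexes gives an exact sequence
\[
\Gamma_J(P)\to\Gamma_J(V)\to\lim^1 K|_J,
\]
so the cokernel of $\Gamma_J(\Phi)$ has dimension at most $\dim\lim^1K|_J$.

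The key estimate is $\dim\lim^1K|_J\le\varepsilon_{S^\circ_J}(\Sigma)$. Every edge target in $Q|_J$ is a sink, so the target space splits by sinks. I would then use only the $-\id$ components of $d_K$ coming from the sinks themselves:
\begin{itemize}
\item an endpoint sink has exactly one incoming edge in $J$, and its slot $K_r$ is hit surjectively by $K_r$;
\item an internal sink $r$ has two incoming edges, so its slot is $K_r\oplus K_r$, and the image of $K_r$ is the anti-diagonal, leaving a cokernel of dimension at most $\dim K_r=\varepsilon_r$.
\end{itemize}
Summing over sinks gives the bound. Then, since $C_J(\Phi)$ does not increase rank,
\begin{align*}
\rho_J(V)&\le\rank\bigl(\lambda_J^V\Gamma_J(\Phi)\bigr)+\dim\operatorname{coker}\Gamma_J(\Phi)\\
&\le\rank\bigl(C_J(\Phi)\lambda_J^P\bigr)+\varepsilon_{S^\circ_J}(\Sigma)\\
&\le p_J(\Sigma)+\varepsilon_{S^\circ_J}(\Sigma).
\end{align*}

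The step I expect to need the most care is the $\lim^1$ estimate together with the exactness claim. This includes checking degenerate cases: $J$ a single vertex, where there are no edges and $\lim^1=0$, and endpoints of $J$ that are not sinks, where no edge slot lives at that vertex. I would also confirm that the edges leaving $J$ are correctly ignored, because $\Gamma_J$ and $C_J$ only see $Q|_J$.
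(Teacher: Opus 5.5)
Your lower bound is the paper's argument. Your reduction of the upper bound to $\dim\operatorname{coker}\Gamma_J(\Phi)\le\dim\lim^1K|_J$ via the snake lemma is also sound, and it is the same route as the paper: the paper's $\delta_\sigma$ (lift at the sources, propagate, record the discrepancies at internal sinks) is an explicit representative of your connecting homomorphism.

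The gap is in the estimate $\dim\lim^1K|_J\le\varepsilon_{S^\circ_J}(\Sigma)$, which you flagged yourself. Its premise, that every edge target of $Q|_J$ is a sink, fails whenever $Q|_J$ is not alternating. In $1\to2\to3$ the vertex $2$ is the target of $1\to2$ but not a sink, so its slot $K_2$ is not covered by your two bullets.

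More importantly, ``using only the $-\id$ components'' is legitimate only at sinks. If $w$ is supported at a sink $r$, then $d_K(w)$ equals $-w_r$ in the slots of the edges into $r$ and vanishes elsewhere. So these pieces really lie in $\im d_K$ and occupy disjoint slots, which is why your count works for alternating orientations. At a pass-through vertex $r$, with incoming edge $e$ and outgoing edge $e'$, a vector supported at $r$ goes to $-w_r$ in slot $e$ and to $h_{e'}w_r$ in slot $e'$, so the $-\id$ part cannot be isolated. Writing $d_K=T+H$ with $T(w)_e=-w_{t(e)}$ and $H(w)_e=h_ew_{s(e)}$, there is no general reason for $\im(T+H)$ to contain $\im T$. (Minor: at an internal sink the image of $K_r$ in $K_r\oplus K_r$ is the diagonal, not the anti-diagonal; the count is unaffected.)

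The estimate is true and the repair is short. Order the vertices of $Q|_J$ so that every edge goes from an earlier to a later vertex, and group the slots by their target vertex. Then $d_K$ restricted to $\bigoplus_{\operatorname{indeg}(r)\ge1}K_r$ is block lower-triangular. Its diagonal blocks are $w\mapsto-w$ at vertices of indegree $1$ and $w\mapsto(-w,-w)$ at internal sinks, all injective, so the restriction is injective. Hence $\rank d_K\ge\sum_{\operatorname{indeg}(r)\ge1}\dim K_r$. Since the target has dimension $\sum_r\operatorname{indeg}(r)\dim K_r$, this gives $\dim\lim^1K|_J\le\sum_{x\in S^\circ_J}\dim K_x$.

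Alternatively, use the Euler characteristic of your two-term complex. It gives $\dim\lim K|_J-\dim\lim^1K|_J=\sum_{r\in R_J}\dim K_r-\sum_{x\in S^\circ_J}\dim K_x$, with $R_J$ the sources. Combine this with $\dim\lim K|_J\le\sum_{r\in R_J}\dim K_r$, which holds because a section is determined by its values at the sources. The paper handles exactly this point by propagation: a vertex that is neither a source nor a sink has a single incoming edge, so it imposes no compatibility condition.
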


In particular, if there is no redundancy at any sink of $J$, then
$p_J(\Sigma)=\rho_J(V)$ even if the cover is redundant elsewhere.
If $J$ has no internal sink, the upper bound reduces to
$\rho_J(V)\le p_J(\Sigma)$; this includes intervals whose arrows all point
in the same direction.
Since $\varepsilon_{S^\circ_J}(\Sigma)\le\varepsilon_{S_J}(\Sigma)
\le\varepsilon(\Sigma)$, the global error bound \eqref{eq:rank_error}
follows.

\begin{proof}
Let $A=\bigoplus_j\F_{I_j}$, $\Phi=\Phi_\Sigma\colon A\to V$, and $K=\ker\Phi$, so that we have an exact sequence
\[
\begin{tikzcd}
0 \arrow[r] & K \arrow[r, "\iota"] & A \arrow[r, "\Phi"] & V \arrow[r] & 0
\end{tikzcd}
\]
and $\dim K_x=\varepsilon_x(\Sigma)$.
We have a commutative diagram with exact rows:
\[
\begin{tikzcd}[column sep=large,row sep=large]
 0 \arrow[r]
 & \Gamma_J(K) \arrow[r, "\iota^\Gamma_*"] 
 \arrow[d,"\lambda_J^K"']
 & \Gamma_J(A) \arrow[r,"\Phi^\Gamma_*"] \arrow[d,"\lambda_J^A"']
 & \Gamma_J(V) \arrow[d,"\lambda_J^V"] & \\
 & C_J(K) \arrow[r, "\iota^C_*"]
 & C_J(A) \arrow[r, "\Phi^C_*"]
 & C_J(V) \arrow[r] & 0.
\end{tikzcd}
\]
Commutativity  gives $\Phi^C_*\circ\lambda_J^A
=\lambda_J^V\circ\Phi^\Gamma_*.$
Moreover, exactness at $C_J(A)$ implies
$\ker\Phi^C_*=\operatorname{im}\iota^C_*,$
and hence
$\dim\ker\Phi^C_*\leq\dim C_J(K).$
Note that $\rank\lambda_J^A=p_J(\Sigma)$ and
 $\rank\lambda_J^V=\rho_J(V)$.
It follows that
\begin{align*}
p_J(\Sigma)
&\leq
\operatorname{rank}(\Phi^C_*\circ\lambda_J^A)
+\dim\ker\bigl(\Phi^C_*\bigr)\\
&=
\operatorname{rank}(\lambda_J^V\circ\Phi^\Gamma_*)
+\dim\ker\bigl(\Phi^C_*\bigr)\\
&\leq
\rho_J(V)+\dim C_J(K).
\end{align*}
A morphism $K|_J\to\F_J$ is determined by its components at the sinks,
since every vertex has a directed path to a sink.  Thus
\[
 \dim C_J(K)\le\sum_{x\in S_J}\dim K_x
 =\varepsilon_{S_J}(\Sigma).
\]
This yields the lower bound.

For the upper bound,
choose a complementary subspace $U$ such that
$\Gamma_J(V)=\operatorname{im}\bigl(
\Phi^\Gamma_*\bigr)\oplus U.$
By commutativity,
$\lambda_J^V(\operatorname{im}\bigl(
\Phi^\Gamma_*\bigr))
=
\operatorname{im}(\Phi^C_*\circ\lambda_J^A).$
Consequently,
\begin{align*}
\rho_J(V)
&\leq \dim\lambda_J^V(\operatorname{im}\bigl(
\Phi^\Gamma_*\bigr))+\dim\lambda_J^V(U)\\
&\leq \operatorname{rank}(\Phi^C_*\circ\lambda_J^A)
+\dim U\\
&\leq p_J(\Sigma)
+\dim\operatorname{coker}\Phi^\Gamma_*.
\end{align*}
To bound $\dim\operatorname{coker}\Phi^\Gamma_*$, 
we account for the freedom in choosing lifts at the sources.
Let $R_J$ be the set of sources of $Q|_J$, and set
$D_J=\bigoplus_{x\in S^\circ_J}K_x$.
Choose a linear right inverse $\sigma_r\colon V_r\to A_r$ of $\Phi_r$
for each $r\in R_J$.
For $s\in\Gamma_J(V)$, lift $s_r$ by $\sigma_r$ at every source and
propagate these values along directed paths, stopping at the sinks.
Every vertex that is neither a source nor a sink has exactly one incoming
edge, so the only compatibility conditions arise at sinks.
At each internal sink $x$, the two incoming paths give values
$a_{x,1}(s),a_{x,2}(s)\in A_x$ with the same image $s_x$; define
$\delta_\sigma(s)_x=a_{x,1}(s)-a_{x,2}(s)\in K_x$.
This gives a linear map $\delta_\sigma\colon\Gamma_J(V)\to D_J$.
If $\delta_\sigma(s)=0$, these propagated values agree at every internal
sink and therefore define a section of $A|_J$ lifting $s$.
Thus $\ker\delta_\sigma\subseteq\operatorname{im}\Phi^\Gamma_*$ and
\begin{align*}
     \dim\operatorname{coker}\Phi^\Gamma_*
 &=\dim\bigl(\Gamma_J(V)/\operatorname{im}\Phi^\Gamma_*\bigr)
 \leq\dim\bigl(\Gamma_J(V)/\ker\delta_\sigma\bigr) \\
 &=\rank\delta_\sigma\leq\dim D_J=
 \sum_{x\in S^\circ_J}\dim K_x
 =\varepsilon_{S^\circ_J}(\Sigma).
\end{align*}
This yields the upper bound.
\end{proof}

\begin{ex}\label[ex]{ex:rank-sharp}
Consider $1\to2\leftarrow3$ and $V=\F_{[1,3]}$.
The natural interval sections on $[1,2]$ and $[2,3]$ form a cover $\Sigma$.
For $J=[1,3]$, $S^\circ_J=S_J=\{2\}$ and
\[
 p_J(\Sigma)=0,\qquad
 \varepsilon_{S^\circ_J}(\Sigma)=\varepsilon_2(\Sigma)=\varepsilon(\Sigma)=1,
 \qquad \rho_J(V)=1
\]
giving an equality to the upper bound.

For the lower bound, let
$V=\F_{[1,3]}\oplus\F_{[1,1]}\oplus\F_{[3,3]}$.
The two sections on $J=[1,3]$ with vertex values
\[
 ((1,0),1,(1,0))\qquad\text{and}\qquad ((1,1),1,(1,1))
\]
form a cover $\Sigma$.  We have
\[
 p_J(\Sigma)=2,\qquad \varepsilon_{S_J}(\Sigma)=1,
 \qquad \rho_J(V)=1.
\]
 
This cover has two members, while $V$ has three bars.
For $J=[1,2]$ the same values of $p_J$, $\varepsilon_{S_J}$, and $\rho_J$
hold, with all redundancy at the endpoint sink $2$ and $S^\circ_J=\emptyset$.
Hence endpoint sinks cannot be omitted from $S_J$.
\end{ex}

\ifanonymous\else
\section*{Acknowledgements}
The authors are grateful to
Y.~Asao, E.~Escolar, Y.~Hiraoka, Y.~Ike, I.~Obayashi, and H.~Ochiai for their
valuable comments.
This research was partially supported by JST Moonshot R\&D Grant Number JPMJMS2021 and
KAKENHI, Grant-in-Aid for Scientific Research (B) 25K00921.
\fi

\bibliographystyle{plain}
\bibliography{reference}

\ifanonymous\else
\par\bigskip
\noindent\begin{minipage}{\textwidth}
\small
\raggedright
Graduate School of Science, Kyoto University,
Kyoto, 606-8502, Japan\par
\medskip
\begin{tabular}{@{}ll@{}}
Junyan Chu & \href{mailto:chujy626@gmail.com}{\texttt{chujy626@gmail.com}}\\
Shizuo Kaji & \href{mailto:kaji.shizuo.7r@kyoto-u.ac.jp}{\texttt{kaji.shizuo.7r@kyoto-u.ac.jp}}\\
Meiyan Kang & \href{mailto:kang.meiyan.2c@kyoto-u.ac.jp}{\texttt{kang.meiyan.2c@kyoto-u.ac.jp}}
\end{tabular}
\end{minipage}\par
\fi
\end{document}